 \let\temp\phi
\let\phi\varphi
\let\varphi\temp
\newcommand{\acts}{\curvearrowright}
\newcommand{\cl}[1]{\overline{#1}}
\newcommand{\C}{\mathbb{C}}
\newcommand{\N}{\mathbb{N}}
\newcommand{\T}{\mathbb{T}}
\newcommand{\Z}{\mathbb{Z}}
\DeclareMathOperator{\Span}{span}
\newcommand{\calF}{\mathcal{F}}
\newcommand{\calJ}{\mathcal{J}}
\newcommand{\calK}{\mathcal{K}}
\newcommand{\calL}{\mathcal{L}}
\newcommand{\calM}{\mathcal{M}}
\newcommand{\calO}{\mathcal{O}}
\newcommand{\calS}{\mathcal{S}}
\newcommand{\calT}{\mathcal{T}}
\newcommand{\calU}{\mathcal{U}}
\newcommand{\Angle}[1]{\left\langle #1 \right\rangle}
\renewcommand{\Hat}{\widehat}
\DeclareMathOperator{\fin}{fin.}
\theoremstyle{plain}
\newtheorem{lemma}{Lemma}[section]
\newtheorem{theorem}[lemma]{Theorem}
\newtheorem{corollary}[lemma]{Corollary}
\theoremstyle{definition}
\newtheorem{example}[lemma]{Example}
\newtheorem{definition}[lemma]{Definition}
\DeclareMathOperator{\Int}{int}
\title{Hyperrigidity of C*-correspondences}
\author{Se-Jin Kim}
\address{University of Waterloo \\
Department of Pure Mathematics \\
Waterloo, Ontario \\
Canada  N2L 3G1}
\email{s362kim@uwaterloo.ca}
\begin{document}

\begin{abstract}
  We show that hyperrigidity for a C*-correspondence $(A,X)$ is equivalent to non-degeneracy of the left action of the Katsura ideal $\calJ_X$ on $X$. Due to the work of Katsoulis and Ramsey~\cite{KatRam2}, our result shows that if $G$ is a locally compact group acting on $(A,X)$ and the Katsura ideal $\calJ_X$ acts on $X$ non-degenerately then the Hao-Ng isomorphism problem for reduced crossed products has a positive solution and the Hao-Ng isomorphism problem for full crossed products has a partial solution.
\end{abstract}

\maketitle

\section{Introduction}
	This paper is concerned with the following isomorphism problem: if $(A,X)$ is a (non-degenerate) C*-correspondence and $G$ is a locally compact group acting continuously on $(A,X)$ then is it the case that we have the identity
	\begin{align*}
		\calO_X \rtimes G = \calO_{X \rtimes G}\;?
	\end{align*}
	The algebra $\calO_X$ is the Cuntz-Pimsner algebra associated to the C*-correspondence $(A,X)$ and $X \rtimes G$ is the crossed product C*-correspondence over $A \rtimes G$. This problem is called the \emph{Hao-Ng isomorphism problem}. It is named after Hao and Ng who establish this identity for the case when $G$ is a locally compact and amenable group~\cite{HaoNg}. At its core, the Hao-Ng isomorphism problem is asking whether the functor which maps a C*-correspondence $(A,X)$ to its Cuntz-Pimsner algebra $\calO_X$ is closed under crossed products. Because of this, the isomorphism problem is fundamental in the understanding of the dynamics of Cuntz-Pimsner algebras.  Recently, significant progress has been made by Katsoulis \cite{Kat} and Katsoulis and Ramsey \cite{KatRam1, KatRam2} on the isomorphism problem using Arveson's notion of hyperrigidity \cite{Arveson2}. In this paper we establish an intrinsic  characterization of hyperrigidity for C*-correspondences.

	We say that a C*-correspondence $(A,X)$ is hyperrigid if the operator space
	\begin{align*}
		S(A,X):= \Span\{x+a+y^*: x,y \in X, a \in A \} \subset \calO_X
	\end{align*}
	has the following extension property: given a representation $\pi: \calO_X \to B(H)$, if $\phi: \calO_X \to B(H)$ is a completely positive and completely contractive map which agrees with $\pi$ on $\calS(A,X)$ then $\phi$ must agree with $\pi$ on $\calO_X$. In \cite{KatRam2}, Katsoulis and Ramsey establish:
	\begin{enumerate}
	\item If $(A,X)$ is a hyperrigid C*-correspondence and $G$ is a locally compact group that acts on $(A,X)$ then we have the identity
	\begin{align*}
		\calO_X \rtimes G = \calO_{X \Hat{\rtimes} G}
	\end{align*}
	where $(A \Hat{\rtimes} G, X \Hat{\rtimes} G)$ is the completion of the pair $(C_c(G,A), C_c(G,X))$ in $\calO_X \rtimes G$. In particular, for hyperrigid C*-correspondences, the crossed product $\calO_X \rtimes G$ is a Cuntz-Pimsner algebra.

	\item If $(A,X)$ is a hyperrigid C*-correspondence and $G$ is a locally compact group that acts on $(A,X)$ then we have the identity
	\begin{align*}
		\calO_X \rtimes_r G = \calO_{X \rtimes_r G}\;.
	\end{align*}
	\end{enumerate}
  It is an open question whether $(A \Hat{\rtimes} G, X \Hat{\rtimes} G)$ is the same as $(A \rtimes G, X \rtimes G)$.

	Our main Theorem is the following.
	\begin{theorem}
		Let $(A,X)$ be a C*-correspondence. The following are equivalent:
		\begin{enumerate}
			\item The C*-correspondence $(A,X)$ is hyperrigid.
			\item We have the identity $\calJ_X \cdot X = X$.
		\end{enumerate}
	\end{theorem}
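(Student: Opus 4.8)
My plan is to route everything through the minimal Stinespring dilation of a competing completely positive map and to distill the problem into a single completely positive ``defect'' map on $\calK(X)$ whose vanishing is controlled by the Katsura ideal.

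I would start from the common setup for both directions. Fix a representation $\pi\colon\calO_X\to B(H)$ and a completely contractive completely positive $\psi$ with $\psi=\pi$ on $S(A,X)$, and write the minimal dilation $\psi=V^*\rho(\,\cdot\,)V$ with $\rho\colon\calO_X\to B(K)$ a representation. For $\xi\in X$ one has $\xi^*\xi=\pi_A(\langle\xi,\xi\rangle)\in A\subseteq S(A,X)$, so $\psi(\xi^*\xi)=\pi(\xi)^*\pi(\xi)=\psi(\xi)^*\psi(\xi)$, and the Stinespring identity forces $(1-VV^*)^{1/2}\rho(\xi)V=0$, i.e. $\rho(\xi)V=V\pi(\xi)$. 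The same computation on $A$ gives $\rho(a)V=V\pi(a)$ for all $a\in A$, so $VH$ reduces $\rho(A)$ and $Q:=1-VV^*$ lies in $\rho(A)'$. Setting $D_\eta:=Q\rho(\eta^*)V$, the intertwiner propagates to all generators exactly when each $D_\eta=0$; since $(A,X)$ is nondegenerate an approximate unit of $A$ certifies $V^*V=1$, and then $\psi=\pi$ iff $D_\eta=0$ for every $\eta\in X$. Thus hyperrigidity is precisely the vanishing of all these defects.

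The central object I would introduce is $\Phi\colon\calK(X)\to B(H)$, $\Phi(k):=\psi(\psi_t(k))-\pi(\psi_t(k))$. Expanding via $\rho(\xi)V=V\pi(\xi)$ yields $\Phi(\theta_{\xi,\zeta})=V^*\rho(\xi)\,Q\,\rho(\zeta^*)V=s(\xi)s(\zeta)^*$ with $s(\xi):=V^*\rho(\xi)Q^{1/2}$. Because $Q\in\rho(A)'$, the map $s$ is right $A$-linear, $s(\xi a)=s(\xi)\pi_0(a)$ for $\pi_0(a):=\rho(a)|_{QK}$, and $s(\xi)^*s(\xi)\le\pi_0(\langle\xi,\xi\rangle)$; hence $s$ induces a contraction $T\colon X\otimes_{\pi_0}QK\to H$ with $\Phi(k)=T(k\otimes1)T^*$, so $\Phi$ is completely positive. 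The Cuntz--Pimsner relation now enters decisively: for $a\in\calJ_X$ the left action of $a$ maps into $\calK(X)$ and $\psi_t$ of it equals $\pi_A(a)\in A\subseteq S(A,X)$, so $\Phi$ vanishes on all left actions of $\calJ_X$. For the implication $(2)\Rightarrow(1)$ I would then take a positive approximate unit $(e_\mu)$ of $\calJ_X$; non-degeneracy $\calJ_X\cdot X=X$ means its left actions $p_\mu$ converge strictly to $1$ in $\calL(X)$, so $p_\mu k\to k$ for all $k\in\calK(X)$. Since $\Phi$ is completely positive with $\Phi(p_\mu)=0$ and $p_\mu\ge0$, its Stinespring form gives $T(p_\mu\otimes1)=0$, whence $\Phi(p_\mu k)=0$ for all $k$ and, in the limit, $\Phi\equiv0$. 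In particular $\psi(\eta\eta^*)=\pi(\eta\eta^*)$, so $D_\eta=0$ for every $\eta$ and $\psi=\pi$.

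For $(1)\Rightarrow(2)$ I would argue by contraposition, and I expect this to be the main obstacle. If $\calJ_X\cdot X\neq X$ then the quotient correspondence $X/\overline{\calJ_X\cdot X}$ is nonzero and carries the zero left $\calJ_X$-action, so the covariance relation imposes \emph{no} constraint on the term $t_X(\eta_0)t_X(\eta_0)^*$ for a vector $\eta_0$ that is nonzero modulo $\overline{\calJ_X\cdot X}$. Starting from a faithful $\pi$, the plan is to build a Toeplitz/Fock-type dilation $\rho$ on $K\supsetneq H$ whose creation operator is a genuine, non-covariant isometric dilation on this degenerate part; the compression $\psi:=V^*\rho(\,\cdot\,)V$ then still agrees with $\pi$ on $S(A,X)$, since only the generators $a,\xi,\zeta^*$ are tested, while $\Phi(\theta_{\eta_0,\eta_0})=\|D_{\eta_0}\|^2>0$, so $\psi\neq\pi$ and $(A,X)$ fails to be hyperrigid. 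The delicate point is to produce this $\rho$ as an honest $*$-representation of \emph{all} of $\calO_X$ rather than merely of the tensor algebra: one must arrange the higher Fock levels so that covariance is preserved on $\overline{\calJ_X\cdot X}$ yet violated on its complement, pinning the defect exactly on $X\ominus\overline{\calJ_X\cdot X}$.
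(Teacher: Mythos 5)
Your argument for $(2)\Rightarrow(1)$ is correct in substance and takes a genuinely different route from the paper's. The paper controls the products $\iota^1(x)\iota^1(y)^*$ by hand: it builds an approximate unit for $\calK(X)$ out of Raeburn--Thompson frames of countably generated submodules (Lemma~\ref{Lemma: approximate unit}) and then runs a Schwarz-inequality plus matrix-positivity argument in the style of Paulsen to place each $\iota^1(a\cdot x)$, $a\in\calJ_X$, in the multiplicative domain. Your route replaces all of this with the Stinespring picture: the defect $\Phi=(\psi-\pi)\circ\psi_t$ on $\calK(X)$ is completely positive because it equals $k\mapsto T(k\otimes 1)T^*$ through the interior tensor product $X\otimes_{\pi_0}QK$; covariance gives $\Phi(\lambda(a))=0$ for $a\in\calJ_X$; and a positive approximate unit $(e_\mu)$ of $\calJ_X$, whose left action converges strictly to the identity on $\calK(X)$ precisely because $\calJ_X\cdot X=X$, then kills $\Phi$ entirely, forcing every defect $D_\eta=s(\eta)^*$ to vanish. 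This is frame-free and arguably cleaner; the details you would still need to write out ($Q$ is a projection commuting with $\rho(A)$, well-definedness and contractivity of $T$, boundedness of $\Phi$ so that $\Phi(p_\mu k)\to\Phi(k)$, and $V^*V=1$ after unitizing, using an approximate unit of $A$ and non-degeneracy of $(A,X)$) are routine and do check out.

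The direction $(1)\Rightarrow(2)$, however, has a genuine gap, and you flag it yourself: everything after ``the plan is to build'' describes what a proof must accomplish rather than accomplishing it, and that missing construction is essentially the whole content of this direction. Two ingredients are absent. First, the dilation $\rho$ must be a $*$-representation of $\calO_X$ itself; a Fock-type dilation is only a representation of $\calT_X$, and a completely positive map defined on $\calT_X$ does not descend through the quotient $\calT_X\to\calO_X$. The paper resolves this by splitting a faithful covariant representation $(\pi^0,\pi^1)$ with the projection $P=\lim_b\pi^0(b)$ (strong-operator limit over an approximate unit of $\calJ_X$), $Q=1-P$, and twisting only the $Q$-corner by an isometry, $\tau^1_V(x)=P\pi^1(x)\otimes I+Q\pi^1(x)\otimes V$; the key computation is that $\phi_V$ applied to $\lambda(bab)$ lands entirely in the $P$-corner, so the twist never disturbs covariance. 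Taking $V$ the unilateral and $U$ the bilateral shift, compression $B(\ell^2(\Z))\to B(\ell^2(\Z_+))$ carries the representation $\tau^0\times\tau^1_U$ to a ccp map that agrees with the representation $\tau^0\times\tau^1_V$ on $S(A,X)$ but differs on $\iota^1(x)\iota^1(x)^*$ whenever $Q\pi^1(x)\neq 0$. Second --- and your sketch does not address this at all --- degeneracy of the $\calJ_X$-action does not by itself guarantee $Q\pi^1(X)\neq 0$ in a given faithful representation: $P$ is only a strong-operator limit, so one may have $P\pi^1(x)=\pi^1(x)$ for every $x$ even though $\|e_\mu\cdot x-x\|\not\to 0$, since the unit vectors witnessing the norm gap can move with the net. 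This is why the paper passes to an ultrapower representation along an ultrafilter compatible with the ordering of the approximate unit, converting the net of witnesses $(h_b)$ into a single vector $h=\lim_\calU h_b$. Until you produce a covariant $\rho$ playing the role of $\tau^0\times\tau^1_U$ and a representation in which the degenerate part is actually visible, the converse remains unproven.
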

	This extends a result of Kakariadis~\cite[Theorem 3.3]{Kakariadis} and Dor-On and Salomon~\cite[Theorem 3.5]{DorOnSalomon} who establish the equivalence for C*-correspondences associated to discrete graphs and a result of Katsoulis and Ramsey who give a sufficient condition for hyperrigidity when $X$ is countably generated over $A$ \cite[Theorem 3.1]{KatRam2}. Finally, we use this result to give an exact characterization for when the C*-correspondence associated to a topological graph is hyperrigid when the range map $r$ is open.

\section{Preliminaries}
In this section, we give a brief overview of the various results on operator systems and Cuntz-Pimsner algebras that we will need for this paper.

\subsection{Operator systems}
	An operator system $S$ is a closed subspace of a unital C*-algebra $A$ for which $1_A \in S$ and $S^* = S$. The class of operator systems has an abstract axiomatization \cite{ChoiEffros}. We will only say a word about the abstract characterization: to axiomatize operator systems it is enough to keep track of the involution $^*$, the cone $M_n(S)_+$ of positive operators on $M_n(S) \subset M_n(A)$, and the unit $1_{M_n(A)} \in M_n(S)$. The appropriate morphisms for operator systems are unital completely positive (ucp) maps and the appropriate embeddings for operator systems are unital complete order isometries. Given an operator system $S$, we say that a pair $(C,\rho)$ is a C*-cover of $S$ if $C$ is a C*-algebra and $\rho$ is a unital complete order isometry $\rho: S \hookrightarrow C$ for which $C^*(\rho(S)) = C$. Given an operator system $S$ there is always a minimal C*-cover called the C*-envelope $(C^*_e(S),\iota)$. It is minimal in the following sense: if $(C,\rho)$ is another C*-cover of $S$ then there is a *-homomorphism $\pi: C \to C^*_e(S)$ for which the diagram
	\begin{center}
		\begin{tikzcd}
			C \arrow{rd}{\pi} & \\
			S \arrow{r}{\iota} \arrow{u}{\rho} & C^*_e(S)
		\end{tikzcd}
	\end{center}
commutes. There is also a universal C*-cover $(C^*_{\max}(S),\iota)$ which is maximal in the following sense: if $(C,\rho)$ is another C*-cover of $S$ then there is a *-homomorphism $\pi: C^*_{\max}(S) \to C$ for which the diagram
	\begin{center}
		\begin{tikzcd}
			C^*_{\max}(S) \arrow{rd}{\pi} & \\
			S \arrow{r}{\rho} \arrow{u}{\iota} & C
		\end{tikzcd}
	\end{center}
commutes. We will always assume without loss of generality that $S \subset C^*_e(S)$.

	An operator subsystem $S$ of a C*-algebra $A$ is said to be hyperrigid in $A$ if we have the following unique extension property: whenever $\pi: C^*(S) \to B(H)$ is a *-homomorphism and whenever $\phi: C^*(S) \to B(H)$ is a unital completely positive (ucp) map extending the ucp map $\pi|_S$ then we must have $\phi = \pi$. Hyperrigid operator systems give us a strong relation between operator systems and their C*-envelope. For example, if $S$ is hyperrigid in $A$ then we must have $C^*(S) \simeq C^*_e(S)$. We say that $S$ is hyperrigid if $S$ is hyperrigid in $C^*_e(S)$. The above definition of hyperrigidity is not the original one. In \cite[Definition 1.1]{Arveson2}, a subspace $S \subset A$ is said to be hyperrigid if whenever we have a faithful embedding $A \subset B(H)$ and whenever $\phi_n: B(H) \to B(H)$ is a sequence of completely contractive and completely positive maps, we have the implication
  \begin{align*}
    \lim_{n \to \infty} \|\phi_n(x)- x\| = 0 \text{ for all }x \in S \text{ implies } \lim_{n \to \infty} \|\phi_n(a) - a\| = 0 \text{ for all }a \in A\;.
  \end{align*}
  In \cite[Theorem 2.1]{Arveson2}, Arveson proves that these two definitions are equivalent in the separable case. The density character of a topological space $X$ is the smallest cardinal $\kappa$ for which there is a subset $E \subset X$ of size $\kappa$ that is dense in $X$. Arveson's proof will go through verbatim when we replace all instances of \emph{separable} with \emph{density character at most $\kappa$} for any infinite cardinal $\kappa$.

	If $S$ is *-closed but non-unital, so long as $S$ contains an approximate unit of $A$, it follows from \cite[Proposition 3.6]{Salomon} that $S$ is hyperrigid in $C^*(S)$ if and only if $S^1 := S+\C 1$ in the unitization $C^*(S)^1$ is hyperrigid.

	A representation $\pi: C^*(S) \to B(H)$ is said to be boundary if $\pi$ is irreducible and $\pi$ admits the unique extension property. Arveson's hyperrigidity conjecture asserts that if all irreducible representations are boundary then the operator system $S$ must be hyperrigid in $A$. Very little is known about the hyperrigidity conjecture. For more information on operator systems, see \cite{Paulsen}. See \cite{Arveson2} for the formulation of the hyperrigidity conjecture and more details on the above results.

  \subsection{The tensor algebra $\calT^+_X$}

Let $(A,X)$ be a C*-correspondence and let $C$ be a C*-algebra. We say that a pair of maps $(\pi^0,\pi^1): (A,X) \to C$ is a Toeplitz pair if
\begin{enumerate}
	\item $\pi^0: A \to C$ is a *-homomorphism,
	\item $\pi^1: X \to C$ is a linear map,
	\item For any $a \in A$ and $x \in X$ we have $\pi^0(a)\pi^1(x) = \pi^1(a\cdot x)$, and
	\item For any $x$ and $y$ in $X$ we have $\pi^0(\Angle{x,y}) = \pi^1(x)^* \pi^1(y)$.
\end{enumerate}

Given a Toeplitz pair $(\pi^0,\pi^1)$, we always have $\pi^1(x)\pi^0(a) = \pi^1(x \cdot a)$ for any $x \in X$ and $a \in A$. A Toeplitz pair can also be thought of as a morphism from the C*-correspondence $(A,X)$ into the C*-correspondence $(C,C)$ where left and right action is given by multiplication and the inner product is given by $\Angle{x,y} = x^*y$. There is always a maximal C*-algebra associated to C*-correspondences called the Toeplitz-Pimsner algebra $\calT_X$. This C*-algebra is maximal in the following sense: there is always a Toeplitz pair
\begin{align*}
	\kappa^0 &: A \to \calT_X \\
	\kappa^1 &: X \to \calT_X
\end{align*}
into $\calT_X$ and whenever $(\pi^0, \pi^1) :(A,X) \to C$ is a Toeplitz pair then there is a *-homomorphism
\begin{align*}
	\pi^0 \times \pi^1 : \calT_X \to C
\end{align*}
for which the diagram
\begin{center}
	\begin{tikzcd}[row sep= large, column sep=large]
		\calT_X \arrow{r}{\pi^0 \times \pi^1} & C \\
		(A,X) \arrow{u}{(\kappa^0,\kappa^1)} \arrow{ur}{(\pi^0,\pi^1)} &
	\end{tikzcd}
\end{center}
commutes. The Toeplitz-Pimsner algebra always contains a canonical norm closed non-selfadjoint operator algebra $\calT^+_X$ called the Tensor algebra. This algebra is described as the non-selfadjoint operator algebra generated by $\kappa^0(A)$ and $\kappa^1(X)$ in $\calT_X$.

The Toeplitz-Pimsner algebra $\calT_X$ always admits a canonical continuous $\T$-action $\gamma$ called the gauge action. Using the universal property of $\calT_X$, it is enough to define $\gamma$ as an action on $(A,X)$: for $z \in \T$,
\begin{align*}
	\gamma^0_z &: A \to A : a \mapsto a \\
	\gamma^1_z &: X \to X : x \mapsto z\cdot x
\end{align*}
will give us the action.

Although the Toeplitz-Pimsner algebra $\calT_X$ is a canonical algebra associated to $(A,X)$, it is often too big for our purposes. As an example, the gauge-invariant uniqueness theorem for graph algebras will not generalize to $\calT_X$. For example, the algebra $\calT_{\C}$ is the universal C*-algebra generated by a single isometry. On the other hand, there is always a gauge-invariant Toeplitz pair from $(\C,\C)$ into $C^*(\Z)$ by mapping $1$ to the canonical unitary associated to $1 \in \Z$. The remedy for this is to restrict our class of representations.

	Fix a C*-correspondence $(A,X)$. The compact operators $\calK(X)$ is the C*-subalgebra of the space $\calL(X)$ of adjointable right-$A$-linear operators on $X$ spanned by the rank one operators $x \Angle{y,\cdot}$ for $x,y \in X$. We think of the left action of $A$ on $X$ as the *-homomorphism $\lambda: A \to \calL(X)$. Given a Toeplitz pair $(\pi^0, \pi^1): (A,X) \to C$, there is always a *-homomorphism
	\begin{align*}
		\phi_{\pi} : \calK(X) \to C: x\Angle{y,\cdot} \mapsto \pi^1(x)\pi^1(y)^*\;.
	\end{align*}

	The Katsura ideal $\calJ_X$ associated to $(A,X)$ consists of elements $a \in A$ for which $\lambda(a) \in \calK(X)$ and for which $ab=0$ whenever $b$ belongs to the kernel of $\lambda$. A Toeplitz pair $(\pi^0,\pi^1): (A,X) \to C$ is said to be covariant if for any element $a \in \calJ_X$, we have the identity
	\begin{align*}
		\pi^0(a) = \phi_\pi(\lambda(a)) \;.
	\end{align*}
	The appropriate choice of C*-algebra is the universal C*-algebra associated to covariant Toeplitz pairs. This algebra is called the Cuntz-Pimsner algebra $\calO_X$. We will let
	\begin{align*}
		\iota^0 &: A \to \calO_X \\
		\iota^1 &: X \to \calO_X
	\end{align*}
	be the canonical covariant Toeplitz pair. Since the gauge action $(\gamma^0,\gamma^1) : \T \acts (A,X)$ is covariant, $\calO_X$ has a gauge action as well. As well, there is a canonical quotient map $\calT_X \to \calO_X$. We also have the gauge invariant uniqueness theorem~\cite[Theorem 6.4]{Katsura}.

	\begin{theorem}[Gauge-invariant uniqueness theorem]
		Suppose that there is a covariant Toeplitz pair $(\pi^0, \pi^1): (A,X) \to C$ with $\pi^0$ injective and suppose that there is a gauge action $\T \acts C^*(\pi^0,\pi^1)$ for which the Toeplitz pair $(\pi^0, \pi^1)$ is $\T$-equivariant. The *-homomorphism
		\begin{align*}
			\pi^0 \times \pi^1 : \calO_X \to C
		\end{align*}
		is necessarily injective.
	\end{theorem}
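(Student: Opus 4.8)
Write $\Phi := \pi^0 \times \pi^1 : \calO_X \to C$ for the $*$-homomorphism furnished by the universal property of $\calO_X$, and set $B := C^*(\pi^0,\pi^1)$, so that $\Phi$ corestricts to a surjection $\calO_X \to B$. The plan is to exploit the two gauge actions to produce faithful conditional expectations onto the fixed-point algebras, to intertwine them through $\Phi$, and thereby to reduce the injectivity of $\Phi$ to its injectivity on the gauge-invariant core.

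\textbf{Conditional expectations.} Since the gauge action $\gamma$ on $\calO_X$ is point-norm continuous, averaging over the circle
\begin{align*}
	E(t) := \int_{\T} \gamma_z(t)\, dz
\end{align*}
defines a conditional expectation onto the fixed-point algebra $\calO_X^{\gamma}$, and likewise the given gauge action $\beta$ on $B$ yields an expectation $E^B$ onto $B^{\beta}$. The expectation $E$ is faithful: if $E(t^*t) = 0$ then for every state $\omega$ the nonnegative continuous function $z \mapsto \omega(\gamma_z(t^*t))$ has vanishing integral, hence vanishes at $z = 1$, giving $\omega(t^*t) = 0$ for all $\omega$ and therefore $t = 0$. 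Because $(\pi^0,\pi^1)$ is $\T$-equivariant we have $\Phi \circ \gamma_z = \beta_z \circ \Phi$ for all $z \in \T$, and integrating over $\T$ gives the key commutation relation $\Phi \circ E = E^B \circ \Phi$; in particular $\Phi$ carries $\calO_X^{\gamma}$ into $B^{\beta}$.

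\textbf{Reduction to the core.} Granting for the moment that $\Phi$ is injective on $\calO_X^{\gamma}$, suppose $\Phi(t) = 0$. Then
\begin{align*}
	\Phi\bigl(E(t^*t)\bigr) = E^B\bigl(\Phi(t)^*\Phi(t)\bigr) = 0,
\end{align*}
and since $E(t^*t)$ lies in $\calO_X^{\gamma}$ the assumed injectivity forces $E(t^*t) = 0$; faithfulness of $E$ then gives $t^*t = 0$, i.e. $t = 0$. Thus everything reduces to injectivity of $\Phi$ on the core $\calO_X^{\gamma}$.

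\textbf{Injectivity on the core --- the main obstacle.} This is where injectivity of $\pi^0$ and the covariance relation do the work. For each $n$ let $\iota^1_n : X^{\otimes n} \to \calO_X$ send $x_1 \otimes \cdots \otimes x_n$ to $\iota^1(x_1)\cdots\iota^1(x_n)$, put $B_n := \overline{\Span\{\iota^1_n(\xi)\iota^1_n(\eta)^* : \xi,\eta \in X^{\otimes n}\}}$ with $B_0 := \iota^0(A)$, and recall that $\calO_X^{\gamma}$ is the closure of the increasing union of the $B_0 + \cdots + B_n$, each $B_n$ being an image of $\calK(X^{\otimes n})$. I would argue by induction on $n$ that $\Phi$ is isometric on $B_0 + \cdots + B_n$, the base case being exactly the hypothesis that $\pi^0 = \Phi\circ\iota^0$ is injective; passing to the closure then gives injectivity on $\calO_X^{\gamma}$. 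The inductive step is the technical heart: using the inner-product identity $\pi^0(\Angle{x,y}) = \pi^1(x)^*\pi^1(y)$ together with the covariance relation $\pi^0(a) = \phi_\pi(\lambda(a))$ on the Katsura ideal $\calJ_X$, one shows that any element of $B_{n+1}$ annihilated by $\Phi$ but not already controlled at level $n$ would force a nonzero element on which $\pi^0$ vanishes, contradicting injectivity of $\pi^0$. Tracking precisely how $\calK(X^{\otimes n})$ interacts with the left action $\lambda$ across the tensor powers is the delicate part, and is exactly what Katsura's Theorem~6.4 carries out in full.
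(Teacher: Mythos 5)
First, a point of comparison: the paper does not prove this theorem at all. It is stated as background and attributed to Katsura (\cite[Theorem 6.4]{Katsura}), so there is no proof in the paper to measure your sketch against; it has to stand on its own. Its first two steps do stand: faithfulness of $E$, the intertwining $\Phi \circ E = E^B \circ \Phi$, and the reduction of injectivity of $\Phi$ to injectivity on the fixed-point algebra $\calO_X^{\gamma}$ are all correct. One thing you should make explicit: for $E^B$ to be defined by averaging you need $z \mapsto \beta_z(b)$ to be norm-continuous on $B = C^*(\pi^0,\pi^1)$, which is not among the stated hypotheses; it is automatic here, because surjectivity of $\Phi$ onto $B$ and equivariance give $\beta_z(\Phi(t)) = \Phi(\gamma_z(t))$, but the sentence deserves to be written.

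The genuine gap is your third step, and it is the whole theorem. Injectivity of $\Phi$ on the core is the entire mathematical content of gauge-invariant uniqueness, and your proposal does not prove it: it describes the expected shape of an induction and then openly defers its execution to Katsura. Moreover, the induction as you set it up glosses over precisely the point that makes the statement delicate. In $\calO_X$ the sums $B_0 + B_1 + \cdots + B_n$ are \emph{not} direct, because the covariance relation identifies $\iota^0(\calJ_X)$ with a C*-subalgebra of $B_1$ (the image of $\calK(X)$), and similarly across higher tensor powers; so ``isometric on $B_0 + \cdots + B_n$'' cannot be propagated summand by summand from the base case ``$\pi^0$ injective.'' What actually has to be shown is that the only identifications occurring in the core of $\calO_X$, and in the corresponding core of $C^*(\pi^0,\pi^1)$, are those forced by covariance on $\calJ_X$, and that because $\calJ_X$ is orthogonal to $\ker \lambda$ these identifications can never annihilate a nonzero element of $\iota^0(A)$ once $\pi^0$ is injective. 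That analysis of the overlaps is the substance of Katsura's proof; ``an element killed by $\Phi$ would force a nonzero element on which $\pi^0$ vanishes'' is a statement of the desired conclusion, not an argument for it. So: as an outline of the standard (and essentially only known) route your sketch is faithful, but as a proof it is missing its heart.
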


	\begin{example}
		Let $E = (E^0, E^1, s,r)$ be a topological graph. We will always assume that our topological graphs are $r$-discrete. Define a C*-correspondence $X(E)$ over the C*-algebra $C_0(E^0)$ as the completion of $C_c(E^1)$ with left and right actions given by
		\begin{align*}
			f \cdot g &: e \mapsto  f(e) g(s(e)) \text{ and} \\
			g \cdot f &: e \mapsto g(r(e))f(e)
		\end{align*}
		for any $f\in C_c(E^1)$ and $g \in C_0(E^0)$ and with inner product given by
		\begin{align*}
			\Angle{f,h} : x \in E^0 \mapsto \sum_{e \in E^1 : s(e) = x} \cl{f(e)}h(e)
		\end{align*}
		for any $f,h \in C_c(E^1)$. The graph C*-algebra $C^*(E)$ is the Cuntz-Pimsner algebra $\calO_{X(E)}$. This construction of C*-correspondences associated to topological graphs are introduced by Katsura in \cite{TopGraph}.
	\end{example}

	A result of Katouslis and Kribs shows that the tensor algebra $\calT^+_X$ always sits completely isometrically as a subset of $\calO_X$ \cite[Lemma 3.5]{KK}. Moreover, they show that $\calO_X$ is the C*-envelope of $\calT^+_X$ \cite[Theorem 3.7]{KK}.

\begin{definition}
	Let $(A,X)$ be a C*-correspondence. We define the operator space $S(A,X)$ as the *-closed operator subspace of $\calO_X$ generated by $X$ and $A$.
\end{definition}

An elementary argument shows that $S(A,X)$ sits completely isometrically in both $\calT_X$ and $\calO_X$.

\section{Hyperrigidity of operator spaces $S(A,X)$ }

 In \cite[Theorem 3.1]{KatRam2}, Katsoulis and Ramsey show that to achieve hyperrigidity of a C*-correspondence $X$ that is countably generated over $A$, it is sufficient for the left action of $\calJ_X$ to act non-degenerately $X$ . We show that not only does this condition hold without any assumption on $X$ but that this condition is also necessary. The following two definitions are in \cite{RaeburnThompson}.

 \begin{definition}
   Let $(A,X)$ be a Hilbert $A$-module. We treat the multiplier algebra $M(A)$ as the C*-algebra $\calL(A)$. The Hilbert $M(A)$-module $M(X)$ is defined as follows: As a linear space, $M(X) = \calL(A,X)$. The right action is given by composition and the inner product is given by $\Angle{x,y}:= x^*\circ y$.
 \end{definition}

 If $x \in X$ and $y \in M(X)$ then $\Angle{y,x} \in A$ and if $a \in A$ then $y \cdot a \in X$. If $(A,X)$ is a C*-correspondence and $a \in A$ is such that $\lambda(a) \in K(X)$ then for any $x \in M(X)$, we have $a \cdot x \in X$. In particular, if $a \in \calJ_X$ then $a \cdot x \in X$.

 \begin{definition}
 Let $(A,X)$ be a Hilbert $A$-module. We say that $X$ is countably generated over $A$ if there exists a sequence $(x_n)_{n \geq 1}$ in $M(X)$ for which the $A$-linear span of $(x_n)_n$ is dense in $X$. A standard normalized frame for $(A,X)$ is a sequence $(x_n)_{n \geq 1}$ in $M(X)$ for which for every $x \in X$ we have the identity
 \begin{align*}
  \Angle{x,x} = \sum_{n \geq 1} \Angle{x,x_n}\Angle{x_n,x}\;.
 \end{align*}
 By \cite[Corollary 3.3]{RaeburnThompson}, whenever $X$ is countably generated over $A$, a standard normalized frame for $X$ exists.

 The reconstruction formula \cite[Theorem 3.4]{RaeburnThompson} states that a sequence $(x_n)_{n \geq 1}$ is a standard normalized frame if and only if we have the identity
 \begin{align*}
  x = \sum_{n \geq 1} x_n\Angle{x_n,x}
 \end{align*}
 for every $x \in X$.
 \end{definition}

\begin{lemma}\label{Lemma: approximate unit}
	Suppose that $(A,X)$ is a C*-correspondence. Let $\calM$ denote the space of all countably generated right $A$-submodules of $X$. For each $Y \in \calM$, let $(x_n(Y))_{n \geq 1}$ denote a standard normalized frame for $Y$. Let
	\begin{align*}
		e_n(Y) := \sum_{k=1}^n x_k(Y) \Angle{x_k(Y),\cdot}\;.
	\end{align*}
	The set $(e_n(Y))_{(n,Y) \in \N \times \calM}$ is an approximate unit for $\calK(X)$ in the following sense: if $T \in \calK(X)$ then we have the identity
  \begin{align*}
    \lim_{Y \to \infty} \lim_{n \to \infty} e_n(Y)\cdot T = T\;.
  \end{align*}
\end{lemma}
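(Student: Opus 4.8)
The plan is to isolate two facts about the operators $e_n(Y)$—that each is a positive contraction, and that the underlying frame reconstructs every vector of $Y$—and then to arrange the two limits so that the outer limit only ever encounters submodules $Y$ large enough to contain the range of $T$. First I would record, for a fixed $Y\in\calM$, that $e_n(Y)$ is positive (it is an increasing sum of operators $\theta_{x,x}:=x\Angle{x,\cdot}\ge 0$) and contractive: the frame condition gives, for every $\eta\in Y$,
\[
\Angle{\eta,e_n(Y)\eta}=\sum_{k=1}^n\Angle{\eta,x_k(Y)}\Angle{x_k(Y),\eta}\le\sum_{k\ge1}\Angle{\eta,x_k(Y)}\Angle{x_k(Y),\eta}=\Angle{\eta,\eta},
\]
so $0\le e_n(Y)\le\id_Y$ and hence $\|e_n(Y)\|\le1$. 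Here I regard the frame vectors as elements of $M(X)$ so that $e_n(Y)\in\calL(X)$; since I will only ever apply $e_n(Y)$ to vectors already lying in $Y$, the bound $\|e_n(Y)|_Y\|\le1$ is all that is used. The reconstruction formula says precisely that for $u\in Y$ one has $e_n(Y)u\to u$ in norm as $n\to\infty$.

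The key organisational step is the choice of the outer index. Fix $T\in\calK(X)$ and choose, once and for all, a sequence $S_j=\sum_i\theta_{u_{ij},v_{ij}}\to T$ of finite-rank operators. Let $Y_T\in\calM$ be the closed right $A$-submodule generated by the countable family $\{u_{ij},v_{ij}\}$ of all legs; it is countably generated and contains $\ran(S_j)$ for every $j$, whence $\ran(T)\subseteq Y_T$. I then claim that for every $Y\in\calM$ with $Y\supseteq Y_T$ the inner limit exists and equals $T$. Since the legs $u_{ij}\in Y_T\subseteq Y$, reconstruction gives $e_n(Y)S_j=\sum_i\theta_{e_n(Y)u_{ij},v_{ij}}\to S_j$ as $n\to\infty$; feeding this and the contraction bound into
\[
\|e_n(Y)T-T\|\le\|e_n(Y)(T-S_j)\|+\|e_n(Y)S_j-S_j\|+\|S_j-T\|\le 2\|T-S_j\|+\|e_n(Y)S_j-S_j\|,
\]
and letting $n\to\infty$ and then $j\to\infty$, yields $\lim_{n\to\infty}e_n(Y)T=T$ for every such $Y$. (Because $\ran(T-S_j)\subseteq Y$, the vectors to which $e_n(Y)$ is applied lie in $Y$, so only $\|e_n(Y)|_Y\|\le1$ is needed.) As the submodules $Y\supseteq Y_T$ are cofinal in $\calM$, the outer limit collapses: $\lim_{Y\to\infty}\lim_{n\to\infty}e_n(Y)T=\lim_{Y\to\infty}T=T$, as asserted.

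The step I expect to be the main obstacle is the bookkeeping in the second paragraph rather than any hard estimate: one must resist taking the inner limit for an arbitrary fixed $Y$, because $\lim_n e_n(Y)T$ need not exist for small $Y$. For instance, with $A=C[0,1]$, $X=A$, the proper submodule $Y=C_0((0,1])$, and a partition-of-unity frame, the net $e_n(Y)\cdot\id_X$ fails to converge in norm. Thus the iterated limit must genuinely be read as a limit over the cofinal family of $Y$ containing $\ran(T)$, and extracting $Y_T$ from a single fixed approximating sequence—so that the inner limit is exactly $T$ on the nose for all $Y\supseteq Y_T$—is what trivialises the outer limit and circumvents the non-convergence.
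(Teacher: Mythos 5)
Your proof is correct and follows essentially the same route as the paper's: approximate $T$ by finite-rank operators, pass to submodules $Y \in \calM$ containing their legs, and combine the frame reconstruction formula with contractivity of $e_n(Y)$ and a triangle inequality. The only real difference is the bookkeeping of the two limits, and it works in your favor. The paper fixes a single $\epsilon$-approximant $S$ and shows only that $\|e_n(Y)T - T\| < 3\epsilon$ eventually in $n$, for any $Y$ containing the legs of $S$; this leaves untouched both the existence of the inner limit $\lim_n e_n(Y)T$ and the meaning of $e_n(Y)T$ when $\ran T \not\subseteq Y$ (the frame vectors live in $M(Y)$, so $\Angle{x_k(Y),x}$ is a priori defined only for $x \in Y$). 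Your device of absorbing the legs of the \emph{entire} approximating sequence $(S_j)_j$ into one submodule $Y_T$, and restricting to the cofinal family $Y \supseteq Y_T$, repairs both points at once: every application of $e_n(Y)$ is to vectors of $Y$, and the inner limit exists and equals $T$ exactly, so the iterated limit is literally valid rather than valid only as a $\limsup$ estimate. Your $C[0,1]$ example showing that $\lim_n e_n(Y)T$ can genuinely fail to exist for small $Y$ confirms that this extra care is not cosmetic.
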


\begin{proof}
		Let $T \in \calK(X)$. Let $\epsilon > 0$. Suppose that $y_1,\ldots, y_n,z_1,\ldots, z_n \in X$ is such that
		\begin{align*}
			\| T - \sum_{k=1}^n y_k\Angle{z_k,\cdot} \| < \epsilon\;.
		\end{align*}
		Let $S = \sum_k y_k \Angle{z_k,\cdot}$. Consider any $Y \in \calM$ for which $y_k,z_k$ belong to $Y$ for all $k$. For any $x \in X$,
		\begin{align*}
			\sum_k y_k\Angle{z_k, x} \in Y\;.
		\end{align*}
		By the reconstruction formula, we know that
		\begin{align*}
			y_k = \sum_{n \geq 1} x_n(Y)\Angle{x_n(Y), y_k} = \lim_{n \to \infty} e_n(Y)(y_k)\;.
		\end{align*}
		for all $k$. This means in particular, that for $n$ large enough,
		\begin{align*}
			\|e_n(Y)S - S\| < \epsilon \;.
		\end{align*}
		Therefore,
		\begin{align*}
			\|T - e_n(Y)T\| \leq 2\|T -S\| + \|S - e_n(Y)S\| < 3\epsilon\;.
		\end{align*}
		This proves that $e_n(Y)$ is an approximate unit for $\calK(X)$.
\end{proof}

\begin{theorem}\label{theorem: hyperrigidity equivalence}
	Let $(A,X)$ be a C*-correspondence. The following are equivalent:
	\begin{enumerate}
		\item The left action of $\calJ_X$ on $X$ is non-degenerate.
		\item $S(A,X)$ is hyperrigid.
	\end{enumerate}

	\begin{proof}
		First assume that $\calJ_X$ acts on $X$ non-degenerately. We denote by $(i^0,i^1)$ the canonical covariant pair
		\begin{align*}
			(i^0,i^1): (A,X) \to \calO_X\;.
		\end{align*}
		Suppose first that $\calJ_X$ acts on $X$ non-degenerately. Fix any *-homomorphism $\pi: \calO_X \to B(H)$ and suppose that $\phi: \calO_X \to B(H)$ is any cpcc-extension of $\pi|_{S(A,X)}$. By a multiplicative domain argument, it suffices to show that for any $x \in X$, we have
		\begin{align*}
			\phi(\iota^1(x)\iota^1(x)^*) = \phi(\iota^1(x))\phi(\iota^1(x))^*\;.
		\end{align*}
		Let $\calM$ and $x_n(Y),e_n(Y)$ be as in Lemma~\ref{Lemma: approximate unit}. Let
    \begin{align*}
    \varphi_\iota: \calK(X) \to \calO_X : x \Angle{y,\cdot} \mapsto \iota^1(x) \iota^1(y)^*\;.
    \end{align*}
    For any $a \in \calJ_X$, since $\lambda(a)$ is a compact operator, we have
		\begin{align*}
			\iota^0(aa^*) = \varphi_\iota\left(\lim_{Y \to \infty}\lim_{n \to \infty}\lambda(a)\cdot e_n(Y) \lambda(a)^* \right) = \lim_{Y\to \infty}\lim_{n \to \infty} \sum_{k < n} \iota^1(a\cdot x_k(Y)) \iota^1(a \cdot x_k(Y))^* \;.
		\end{align*}
		By the Schwarz inequality,
		\begin{align*}
			\phi(\iota^0(aa^*)) &= {\lim}_{Y} {\lim}_n \sum_{k < n} \phi(\iota^1(a\cdot x_k(Y))\iota^1(a \cdot x_k(Y))^*) \\
			&\geq {\lim}_{Y} {\lim}_n \sum_{k < n} \phi(\iota^1(a \cdot x_k(Y)))\phi(\iota^1(a \cdot x_k(Y)))^* \\
			&= {\lim}_{Y} {\lim}_n \sum_{k < n} \pi(\iota^1(a \cdot x_k(Y)))\pi(\iota^1(a \cdot x_k(Y)))^* \\
			&= \pi(\iota^0(aa^*)) = \phi(\iota^0(aa^*))\;.
		\end{align*}
		From this, we have the identity
		\begin{align*}
			{\lim}_{Y} {\lim}_n \sum_{k < n} \phi(\iota^1(a \cdot x_k(Y))\iota^1(a\cdot x_k(Y))^*) = {\lim}_{Y}{\lim}_n \sum_{k <n} \pi(\iota^1(a \cdot x_k(Y))\iota^1(a \cdot x_k(Y))^*)\;.
		\end{align*}
		By the reconstruction formula, for any $x \in X$ and for any $Y \in \calM$ with $x \in Y$, we have for all $a \in \calJ_X$,
		\begin{align*}
			a\cdot x = \sum_{n \geq 1} a \cdot x_n(Y) \Angle{x_n(Y),x}\;.
		\end{align*}
		Let $\epsilon > 0$. Fix any $Y \in \calM$ for which we have the bound
		\begin{align*}
			0 \leq \sum_{n \geq 1} \phi(\iota^1(a\cdot x_n(Y))\iota^1(a \cdot x_n(Y))^*) - \phi(\iota^1(a\cdot x_n(Y)))\phi(\iota^1(a\cdot x_n(Y)))^* \leq \epsilon 1\;.
		\end{align*}
		Let $\alpha_n = \iota^1(a \cdot x_n(Y))$ and let $\beta_n = \iota^1(a\cdot x \Angle{x,x_n(Y)})$. Observe that
		\begin{align*}
			\iota^1(a\cdot x)\iota^1(a \cdot x)^* = \sum_{n \geq 1} \alpha_n \beta_n^* \;.
		\end{align*}
		Our goal is to show that
		\begin{align*}
			\phi(\iota^1(a \cdot x)\iota^1(a \cdot x)^*) = \phi(\iota^1(a \cdot x)) \phi(\iota^1(a \cdot x))^*\;.
		\end{align*}
		Consider for fixed $n \geq 1$ the $1 \times n$-matrices $A_n = (\alpha_1, \cdots , \alpha_n)$ and $B_n = (\beta_1,\cdots, \beta_n)$. For a positive $M \in M_2(\calO_X)$, let
		\begin{align*}
			P(M):= \left[\begin{array}{c|c}
				&  \\
				 I_{2n} & A_n^* \;\; B_n^* \\
				&  \\\hline
				  \begin{array}{c} A_n \\ B_n \end{array} & M
			\end{array} \right]
		\end{align*}
		The same argument as in \cite[Lemma 3.1]{Paulsen} shows that the matrix $P(M)$ is positive if and only if we have the bound
		\begin{align*}
		\left[\begin{array}{c} A_n \\ B_n \end{array}\right]\left[\begin{array}{cc} A_n^* & B_n^* \end{array} \right] \leq M \;.
		\end{align*}
		Taking
		\begin{align*}
			M = \left[\begin{array}{cc}
					A_nA_n^* & A_n B_n^* \\
					B_nA_n^* & B_nB_n^*
			\end{array} \right]  \;,
		\end{align*}
		we can conclude $P(M)$ is positive in this case. Since $\phi$ is contractive and completely positive, applying the $(2n+2)$-amplification of $\phi$ onto $P(M)$, we get the bound
		\begin{align*}
			\left[\begin{array}{c}\phi(A_n) \\ \phi(B_n) \end{array}\right]\left[\begin{array}{cc} \phi(A_n)^* & \phi(B_n)^* \end{array} \right] \leq \phi(M)\;.
		\end{align*}
		That is, the matrix
		\begin{align*}
			\left[\begin{array}{c c} \phi(A_nA_n^*) - \phi(A_n)\phi(A_n)^* & \phi(A_n B_n^*) - \phi(A_n)\phi(B_n^*) \\
			\phi(B_n A_n^*) - \phi(B_n)\phi(A_n)^* & \phi(B_nB_n^*) - \phi(B_n)\phi(B_n)^*
			\end{array}\right]
		\end{align*}
		is positive. Since the $(1,1)$ corner of this matrix is at most $\epsilon$, we get positivity of the matrix
	\begin{align*}
			\left[\begin{array}{c c} \epsilon I_2 & \phi(A_n B_n^*) - \phi(A_n)\phi(B_n^*) \\
			\phi(B_n A_n^*) - \phi(B_n)\phi(A_n)^* & \phi(B_nB_n^*) - \phi(B_n)\phi(B_n)^*
			\end{array}\right]\;.
		\end{align*}
		In particular, we have the bound
		\begin{align}
			\| \phi(A_nB_n^*) - \phi(A_n)\phi(B_n)^*\|^2 &\leq \epsilon\|\phi(B_nB_n^*) - \phi(B_n)\phi(B_n)^*\| \nonumber \\
			&\leq 2\epsilon\|B_nB_n^*\| \label{equation: main}\;.
		\end{align}
		A calculation shows that
		\begin{align*}
			\| B_n B_n^* \| &= \left\|\sum_{k \leq n} \iota^1(a\cdot x)\iota^0(\Angle{x, x_k(Y)}\Angle{x_k(Y), x})\iota^1(a \cdot x)^* \right\| \\
      &= \left\|\iota^1(a \cdot x)\;\iota^0\left( \sum_{k \leq n} \Angle{x,x_k(Y)}\Angle{x_k(Y),x}\right) \iota^1(a \cdot x)^* \right\| \\
			&\leq \|a \cdot x\|^2 \left\|\sum_{k \leq n} \Angle{x,x_k(Y)}\Angle{x_k(Y),x} \right\|\;.
		\end{align*}
		Since the sequence $x_n(Y)$ is a standard normalized frame, we have the identity
		\begin{align*}
			\|x\|^2 = \left\|\sum_{k \geq 1} \Angle{x,x_k(Y)}\Angle{x_k(Y),x}\right\| \;.
		\end{align*}
		Therefore, we have the inequality
		\begin{align*}
			\|B_nB_n^*\| \leq \|a \cdot x\|^2 \|x\|^2\;
		\end{align*}
		for any $n$. As well, a calculation shows that
		\begin{align*}
				\lim_{n \to \infty} \phi(A_nB_n^*) - \phi(A_n)\phi(B_n)^*	=& \lim_{n \to \infty} \sum_{k \leq n } \phi(\iota^1(a \cdot x_n(Y)\Angle{x_n(Y),x}) \iota^1(a\cdot x)^*) \\
				&-\phi(\iota^1(a \cdot x_n(Y)\Angle{x_n(Y),x}))\phi(\iota^1(a\cdot x))^* \\
				=& \phi(\iota^1(a \cdot x) \iota^1(a \cdot x)^*) - \phi(\iota^1(a \cdot x)) \phi(\iota^1(a \cdot x))^* \;.
		\end{align*}
		The above calculation with equation~\ref{equation: main} give us the bound
		\begin{align*}
			\|\phi(\iota^1(a \cdot x) \iota^1(a \cdot x)^*) - \phi(\iota^1(a \cdot x)) \phi(\iota^1(a \cdot x))^* \|^2 &= \lim_{n \to \infty} \| \phi(A_nB_n^*) - \phi(A_n)\phi(B_n)^*\|^2\\
			&\leq 2 \epsilon \|a \cdot x\|^2\|x\|^2\;.
		\end{align*}
		Since this identity is independent of the choice of $Y$ and $\epsilon$, we may conclude that for any $a \in \calJ_X$ and for any $x \in X$, the element $\iota^1(a \cdot x)$ belongs to the multiplicative domain of $\phi$. Since $\calJ_X$ acts non-degenerately on $X$, this proves that any element of $\iota^1(X)$ belongs to the multiplicative domain of $\phi$, showing hyperrigidity.

		For the converse, assume that $\calJ_X$ does not act on $X$ non-degenerately. Fix a faithful covariant representation $(\pi^0, \pi^1): (A,X) \to B(H)$. Let $N \subset \calJ_{X,+}$ form a contractive approximate unit for $\calJ_X$ under the ordering induced by the positive operators. Define operators $P = \lim_{a \in N} \pi^0(a)$ and $Q = 1-P$ where the limit is taken in the strong operator topology on $B(H)$. For any isometry $V \in B(\calK)$, let $(\tau^0,\tau_V^1) : (A,X) \to B(H \otimes \calK)$ be the following pair of maps
		\begin{align*}
			\tau^0 &: A \to B(H \otimes \calK) : a \mapsto \pi^0(a) \otimes I \\
			\tau_V^1 &: X \to B(H \otimes \calK) : x \mapsto P\pi^1(x) \otimes I + Q\pi^1(x) \otimes V\;.
		\end{align*}
		It is immediate that $\tau^0$ is a *-homomorphism and that $\tau_V^1$ is linear. For any $a \in A$ and $x \in X$, first observe that since $P$ is the projection which generates the ideal $\pi^0(\calJ_X)$ in $\pi^0(A)$, that $P$ commutes with $\pi^0(a)$. Thus,
		\begin{align*}
			\tau^0(a)\tau_V^1(x) &= (\pi^0(a) \otimes I)(P\pi^1(x)\otimes I + Q\pi^1(x) \otimes V) \\
				&= P(\pi^0(a)\pi^1(x))\otimes I + Q\pi^0(a)\pi^1(x) \otimes V \\
				&= P\pi^1(a\cdot x) \otimes I + Q \pi^1(a \cdot x) \otimes V = \tau^1(a \cdot x)\;.
		\end{align*}
		As well, for $x, y \in X$, we have
		\begin{align*}
			\tau_V^1(x)^*\tau_V^1(x) &=(P \pi^1(x) \otimes I + Q \pi^1(x) \otimes V)^*(P \pi^1(y) \otimes I + Q \pi^1(y) \otimes V) \\
			&= \pi^1(x)^*P\pi^1(y) \otimes I + \pi^1(x)^*Q\pi^1(y) \otimes I \\
			&= \pi^1(x)^*(P + Q)\pi^1(y) \otimes I = \pi^0(\Angle{x,y}) \otimes I \\
			&= \tau^0(\Angle{x,y})\;.
		\end{align*}
		This is therefore a Toeplitz representation for $(A,X)$. To see that this representation is covariant, let $a \in \calJ_X$. Since $\lambda(a) \in \calK(X)$, for $\epsilon > 0$, let $x_1,\ldots, x_n, y_1,\ldots, y_n \in X$ such that for any contraction $z \in X$, we have
		\begin{align*}
			\left\|a \cdot z - \sum_{k=1}^n x_k \Angle{y_k,z} \right\| < \epsilon \;.
		\end{align*}
		For any $b \in N$, we have
		\begin{align*}
			\left\|b a b  \cdot z - \sum_{k=1}^n b\cdot x_k \Angle{b \cdot y_k,z} \right\| < \epsilon \;.
		\end{align*}
		In particular, $\lambda(b a b)$ is within $\epsilon$ of the compact operator $\sum_k bx_k \Angle{by_k, \cdot}$. Let
    \begin{align*}
      \phi_V : \calK(X) \to B(H) : x_i \Angle{y_i,\cdot} \mapsto \tau^1_V(x_i)\tau^1_V(y_i)^*\;.
    \end{align*}
    A calculation shows
		\begin{align*}
			&\phi_V\left({\sum}_k bx_k \Angle{by_k, \cdot} \right) = {\sum}_k \tau^1(b x_k) \tau^1(b y_k)^* \\
			&= {\sum}_k (P \pi^1(bx_k) \otimes I + Q \pi^1(b x_k) \otimes V)(P \pi^1(b y_k) \otimes I + Q \pi^1(b y_k) \otimes V)^* \\
			&= {\sum}_k (P \pi^1(b x_k) \otimes I )(P \pi^1(b y_k) \otimes I)^* \\
			&= {\sum}_k (\pi^1(b x_k) \otimes I)(\pi^1(b y_k) \otimes I)^* \\
			&= \left( {\sum}_k \pi^1(b x_k) \pi^1(b y_k)^* \right) \otimes I \;.
		\end{align*}
		For any $b \in N$,
		\begin{align*}
			\left\|\phi_V(\lambda(b a b)) - \pi^0(bab) \otimes I  \right\| \leq& \left\| \phi_V(\lambda(b a b)) - \phi_V\left({\sum}_k bx_k \Angle{by_k, \cdot} \right)   \right\| \\
			&+ \left\|\phi_V\left({\sum}_k bx_k \Angle{by_k, \cdot} \right) - \pi^0(bab) \otimes I \right\| \\
			<& 2\epsilon \;.
		\end{align*}
		Since this is true for arbitrary $\epsilon>0$, we conclude that $\phi_V(\lambda(b a b)) = \tau^0(b a b)$ for all $b \in N$. Since $N$ is an approximate unit for $\calJ_X$ and $a \in \calJ_X$, we have $\phi_V(\lambda(a)) = \tau^0(a)$.

		Let us fix the unilateral shift $V \in B(\ell^2(\Z_+))$ and the bilateral shift $U \in B(\ell^2(\Z))$. Let $\Phi: B(\ell^2(\Z)) \to B(\ell^2(\Z_+))$ be the ucp map given by restriction. The diagram
		\begin{center}
			\begin{tikzcd}[row sep=large, column sep=large]
				& B(H \otimes \ell^2(\Z)) \arrow{d}{\Phi} \\
				\calO_X \arrow{ur}{\tau^0 \times \tau^1_U} \arrow{r}{\tau^0 \times \tau^1_V} & B(H \otimes \ell^2(\Z_+))
			\end{tikzcd}
		\end{center}
		commutes. So long as we can show $Q \pi^1(X) \neq 0$, we are done, since $\Phi \circ (\tau^0 \times \tau^1_U) \neq \tau^0 \times \tau^1_V$ but agree on $S(A,X)$. Suppose that $Q \pi^1(X) = 0$ in order to derive a contradiction. Since $P + Q = I$, this means that $P\pi^1(x) = \pi^1(x)$ for every $x \in X$. If $\calJ_X$ acts on $X$ degenerately, then by taking a subnet if necessary, there is some $\epsilon > 0$ and some $x \in X$ so that for every $b \in N$, there is some unit vector $h_b \in H$ for which we have the identity
		\begin{align*}
 			\Angle{(\pi^1(x)^*\pi^1(x) - \pi^1(x)^*\pi^0(b)\pi^1(x))h_b,h_b} \geq \epsilon\;.
		\end{align*}
		If $a \geq b$ in $N$ then we have the identity
		\begin{align*}
			\Angle{(\pi^1(x)^*\pi^1(x) - \pi^1(x)^*\pi^0(b)\pi^1(x))h_a, h_a} &\geq \Angle{(\pi^1(x)^*\pi^1(x) - \pi^1(x)^*\pi^0(a)\pi^1(x))h_a,h_a} \\
			&\geq \epsilon\;.
		\end{align*}
		If we could replace the net $(h_b)_{b \in N}$ with a fixed vector $h_b = h \in H$ for all $b$ then we may conclude from the above inequality that  $P\pi^1(x) \neq \pi^1(x)$ and we would have our contradiction.

    In order to guarantee that a vector $h \in H$ as above exists, we need to fix a specific faithful representation. Take any non-principal ultrafilter $\calU$ over $N$ containing the set
		\begin{align*}
			S := \{\{a \in N : a \geq b \}: b \in N \}\;.
		\end{align*}
		Such an ultrafilter exists since $S$ has the finite intersection property. Consider the covariant pair
		\begin{align*}
			(\cl{\pi}^0, \cl{\pi}^1):(A,X) \to B(H^\calU)
		\end{align*}
		so that $\cl{\pi}^0(a)(\lim_\calU k_b) = \lim_\calU \pi^0(a)\cdot k_b$ and $\cl{\pi}^1(x)(\lim_\calU k_b) = \lim_\calU \pi^1(x)\cdot k_b$. Replacing $(\pi^0,\pi^1)$ with $(\cl{\pi}^0,\cl{\pi}^1)$ and taking $h = \lim_\calU h_b$ will do.
	\end{proof}
\end{theorem}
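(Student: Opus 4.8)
The plan is to prove the two implications separately: I would establish $(1) \Rightarrow (2)$ by a multiplicative domain argument driven by the approximate unit of Lemma~\ref{Lemma: approximate unit}, and the reverse implication by constructing an explicit failure of the unique extension property whenever $\calJ_X$ acts degenerately. For $(1) \Rightarrow (2)$, assume $\calJ_X \cdot X = X$, write $(\iota^0,\iota^1)$ for the canonical covariant pair into $\calO_X$, fix a representation $\pi : \calO_X \to B(H)$, and let $\phi$ be a completely positive completely contractive map agreeing with $\pi$ on $S(A,X)$. I would first reduce to showing that each $\iota^1(x)$ lies in the multiplicative domain of $\phi$: since $\phi = \pi$ on the C*-subalgebra $\iota^0(A)$, that copy is automatically in the multiplicative domain, and the relation $\iota^1(x)^*\iota^1(x) = \iota^0(\Angle{x,x})$ gives the $b^*b$ half of the multiplicative domain identity for free, so the only thing to prove is $\phi(\iota^1(x)\iota^1(x)^*) = \phi(\iota^1(x))\phi(\iota^1(x))^*$.

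The mechanism for producing this identity is the compact-operator approximate unit. For $a \in \calJ_X$ we have $\lambda(a) \in \calK(X)$, so $\iota^0(aa^*) = \varphi_\iota(\lim_Y \lim_n \lambda(a) e_n(Y)\lambda(a)^*)$ expands as an iterated limit of finite sums of rank-one terms $\iota^1(a\cdot x_k(Y))\iota^1(a\cdot x_k(Y))^*$. Applying $\phi$ and the Schwarz inequality termwise gives $\phi(\iota^0(aa^*)) \geq \lim_Y \lim_n \sum_k \phi(\iota^1(a\cdot x_k(Y)))\phi(\iota^1(a\cdot x_k(Y)))^*$, and the right-hand side equals $\pi(\iota^0(aa^*)) = \phi(\iota^0(aa^*))$ because $\phi = \pi$ on $S(A,X)$ and $\pi$ is multiplicative. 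Hence the inequality is an equality, forcing the sum of Schwarz defects $\phi(\iota^1(a\cdot x_k(Y))\iota^1(a\cdot x_k(Y))^*) - \phi(\iota^1(a\cdot x_k(Y)))\phi(\iota^1(a\cdot x_k(Y)))^*$ to be made arbitrarily small by choosing $Y$ well. To transfer smallness of these diagonal defects to the target off-diagonal defect, I would use the standard $2\times 2$ operator-matrix positivity trick of \cite[Lemma 3.1]{Paulsen}: arrange the vectors $\iota^1(a\cdot x_k(Y))$ and $\iota^1(a\cdot x_k(Y)\Angle{x_k(Y),x})$ into row matrices and apply an amplification of $\phi$ to bound the off-diagonal defect by the geometric mean of the two diagonal corners. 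The reconstruction formula identifies the off-diagonal limit with exactly $\phi(\iota^1(a\cdot x)\iota^1(a\cdot x)^*) - \phi(\iota^1(a\cdot x))\phi(\iota^1(a\cdot x))^*$, while the standard-frame normalization keeps the relevant norm bounded by $\|a\cdot x\|^2\|x\|^2$; letting $\epsilon$ and $Y$ run, the defect vanishes, so $\iota^1(a\cdot x)$ is in the multiplicative domain. Non-degeneracy $\calJ_X \cdot X = X$ together with norm-closedness of the multiplicative domain then covers all of $\iota^1(X)$.

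For the converse I would argue the contrapositive: assuming $\calJ_X$ acts degenerately, I would exhibit a representation and a completely positive completely contractive map agreeing with it on $S(A,X)$ but not on $\calO_X$. Starting from a faithful covariant pair $(\pi^0,\pi^1)$ and a contractive approximate unit $N \subset \calJ_X$, I would set $P$ to be the strong-operator limit of $\pi^0(a)$ over $a \in N$ and $Q = 1-P$; this $P$ is the support projection of the ideal generated by $\pi^0(\calJ_X)$, hence central and commuting with $\pi^0(A)$, and $Q\pi^0(b) = 0$ for $b \in \calJ_X$. For an isometry $V$ I would deform the representation to $\tau^0(a) = \pi^0(a)\otimes I$ and $\tau^1_V(x) = P\pi^1(x)\otimes I + Q\pi^1(x)\otimes V$ and check directly that this is again a covariant Toeplitz pair (the Toeplitz relations use $PQ=0$ and centrality of $P$; covariance reduces, via $N$ and $Q\pi^0(b)=0$, to covariance of $(\pi^0,\pi^1)$), inducing a $*$-homomorphism on $\calO_X$. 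Taking $V$ the unilateral shift and $U$ the bilateral shift, the compression $\Phi$ onto $H\otimes\ell^2(\Z_+)$ is a ucp map with $\Phi\circ(\tau^0\times\tau^1_U) = \tau^0\times\tau^1_V$ on $S(A,X)$, so these two maps witness a failure of hyperrigidity exactly when $Q\pi^1(X)\neq 0$.

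The main obstacle is precisely guaranteeing $Q\pi^1(X) \neq 0$, that is, ensuring the support projection $Q$ actually detects the degeneracy. Degeneracy yields, for some fixed $x \in X$ and $\epsilon > 0$, vectors $h_b$ with $\Angle{(\pi^1(x)^*\pi^1(x) - \pi^1(x)^*\pi^0(b)\pi^1(x))h_b,h_b} \geq \epsilon$, but these vectors depend on $b \in N$, whereas I need a single vector surviving all of $N$ at once to conclude $P\pi^1(x) \neq \pi^1(x)$. I would resolve this by replacing $(\pi^0,\pi^1)$ with its ultrapower along a non-principal ultrafilter $\calU$ over $N$ containing every tail $\{a \in N : a \geq b\}$ (these have the finite intersection property) and taking $h = \lim_{\calU} h_b$; the ultrapower preserves faithfulness and covariance, so the deformation applies to it and now genuinely fails the unique extension property.
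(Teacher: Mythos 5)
Your proposal is correct and follows essentially the same route as the paper's proof: the same multiplicative-domain reduction driven by the frame approximate unit of Lemma~\ref{Lemma: approximate unit}, the Schwarz inequality forced into equality, the Paulsen $2\times 2$ positivity trick combined with the reconstruction formula, and for the converse the same shift-deformed covariant pair $(\tau^0,\tau^1_V)$ with the ultrapower trick to produce the single vector witnessing $Q\pi^1(X)\neq 0$. No gaps beyond the level of detail one expects in a plan; the execution would match the paper's argument step for step.
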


\begin{corollary}
  Let $G$ be a locally compact group and let $(A,X)$ be a $G$-C*-correspondence with $\calJ_X \acts X$ non-degenerate. We have the isomorphisms
  \begin{enumerate}
    \item $\calO_X \rtimes_r G = \calO_{X \rtimes_r G}$ and
    \item $\calO_X \rtimes G = \calO_{X \Hat{\rtimes} G}$
  \end{enumerate}
  where $(A \Hat{\rtimes}G , X \Hat{\rtimes} G)$ is the C*-correspondence given by the completion of $(C_c(G,A), C_c(G,X))$ in $\calO_X \rtimes G$.
\end{corollary}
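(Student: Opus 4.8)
The plan is to obtain both isomorphisms by feeding the conclusion of Theorem~\ref{theorem: hyperrigidity equivalence} into the two results of Katsoulis and Ramsey recalled in the introduction. The hypothesis that $\calJ_X$ acts non-degenerately on $X$ is exactly condition (1) of Theorem~\ref{theorem: hyperrigidity equivalence}, so I would first invoke that theorem to conclude that $S(A,X)$ is hyperrigid. By the definition of hyperrigidity for a C*-correspondence given in the introduction, this says precisely that $(A,X)$ is a hyperrigid C*-correspondence.

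With hyperrigidity of $(A,X)$ established, the two displayed isomorphisms are immediate applications of the Katsoulis-Ramsey theorems to the given $G$-action. For item (1) I would apply their result on reduced crossed products to obtain $\calO_X \rtimes_r G = \calO_{X \rtimes_r G}$, and for item (2) I would apply their result on full crossed products to obtain $\calO_X \rtimes G = \calO_{X \Hat{\rtimes} G}$, with $(A \Hat{\rtimes} G, X \Hat{\rtimes} G)$ defined as the completion of $(C_c(G,A), C_c(G,X))$ inside $\calO_X \rtimes G$ as in the introduction.

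The only step requiring any care is checking that the hyperrigidity produced by Theorem~\ref{theorem: hyperrigidity equivalence} is the same notion used in the Katsoulis-Ramsey framework. Since $C^*(S(A,X)) = \calO_X$ and the representations and cpcc maps in the unique extension property are taken on all of $\calO_X$, hyperrigidity of $S(A,X)$ coincides with the introduction's definition of a hyperrigid C*-correspondence; here one also uses that $S(A,X)$ contains an approximate unit of $\calO_X$ --- any approximate unit of $A$ works, since the correspondence is non-degenerate --- so that the non-unital hyperrigidity framework recalled in the preliminaries applies. I do not expect any further obstacle: all of the analytic difficulty has already been absorbed into the proof of the main theorem, and the corollary is simply its payoff for the Hao-Ng isomorphism problem.
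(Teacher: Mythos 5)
Your proposal is correct and takes essentially the same route as the paper, which likewise deduces hyperrigidity of $(A,X)$ from Theorem~\ref{theorem: hyperrigidity equivalence} and then cites the Katsoulis--Ramsey results (\cite[Theorem 3.10]{KatRam2}) to obtain both isomorphisms. Your extra check that the two formulations of hyperrigidity agree is sound but is simply taken for granted in the paper's two-line argument.
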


\begin{proof}
  By \cite[Theorem 3.10]{KatRam2}, our isomorphisms follow from hyperrigidity of $(A,X)$. By Theorem~\ref{theorem: hyperrigidity equivalence} we have the result.
\end{proof}

As an application, we will characterize the topological graphs with range map $r$ open for which the associated space $S(C_0(E^0), X(E))$ is hyperrigid. This generalizes a result of Kakariadis \cite[Theorem 3.3]{Kakariadis} and Dor-On and Salomon \cite[Theorem 3.5]{DorOnSalomon} that give a characterization for $E$ discrete. First a bit of notation: let $E^0_{\fin}$ be the open subset of $E^0$ for which we have the identity
\begin{align*}
	C_0(E^0_{\fin}) = \lambda^{-1}(\calK(X(E)))\;.
\end{align*}

The kernel of $\lambda$ consists of those elements $f \in C_0(E^0)$ for which $f|_{r(E^1)} = 0$. Thus,
\begin{align*}
	\ker \lambda = C_0(E^0 \setminus \cl{r(E^1)})\;.
\end{align*}
This implies that $\calJ_{X(E)} = C_0(E^0_{\fin} \cap \Int(\cl{r(E^1)}))$. Let $Y = \Int(\cl{r(E^1)})$. Assume that $E^0_{\fin} \cap Y$ is dense in $Y$. I claim that $\calJ_{X(E)} X(E) = X(E)$. Let $\phi_i$ be an approximate unit for $C_0(E^0_{\fin} \cap Y)$. For any $f \in C_c(E^1)$, I claim that $\phi_i \cdot f$ converges to $f$. Consider the positive function $\calF_i = \Angle{f - \phi_i \cdot f, f - \phi_i \cdot f}$. Observe that as $f$ is compactly supported that all $\calF_i$ are supported on a compact set $K$. As well, $\calF_i(x)$ is a decreasing net for all $x \in E^0$. By the uniform limit theorem, the function
\begin{align*}
	\calF : E^0 \to \C: x \mapsto \lim_{i \to \infty} \calF_i(x)
\end{align*}
is continuous and compactly supported. We need to show that $\calF = 0$. If not, there is some open set $U \subset E^0$ for which $\calF|_U > 0$. If $x \in U$ then for any $e \in s^{-1}(x)$, $r(e) \not\in E^0_{\fin}$. That is, if $x \in r(s^{-1}(U))$ then $x \not\in E^0_{\fin}$. Assume that $r$ is open. That $r(s^{-1}(U))$ is an open subset of $Y$ and that $r(s^{-1}(U)) \cap E^0_{\fin} = \varnothing$ is a contradiction on the density of $E^0_{\fin} \cap Y$ in $Y$. Thus we have $\calJ_{X(E)} X(E) = X(E)$.

If $E^0_{\fin} \cap Y$ is not dense in $Y$ then there is some open subset $U$ of $Y$ so that $U \cap E^0_{\fin} = \varnothing$. Consider any non-zero function $f \in C_c(E^1)$ supported on $r^{-1}(U)$. If $\calJ_{X(E)}$ acts non-degenerately on $X(E)$, then by Cohen's factorization theorem, there is some $x \in X(E)$ and some $g \in \calJ_{X(E)}$ for which $g \cdot x = f$. Say $f_i \in C_c(E^1)$ for which $\lim_i f_i = x$. For any point $e \in E^1$, if $f(e) \neq 0$ then $r(e) \in U$. This implies that $g(r(e)) = 0$. For any $i$,
\begin{align*}
	\Angle{g \cdot f_i, f}: x \mapsto \sum_{e \in E^1: s(e) = x} \cl{g(r(e))}f(e)\cl{f_i(e)}  = 0\;.
\end{align*}
Thus we have $\Angle{f,f} = \lim_i \Angle{g \cdot f_i,f} = 0$ -- a contradiction.

All this proves:
\begin{theorem}
	Let $E$ be a topological graph and let $r$ be open. The following are equivalent:
	\begin{enumerate}
		\item The space $S(C_0(E^0), X(E))$ is hyperrigid.
		\item The set $E^0_{\fin}$ is dense in $E^0$.
	\end{enumerate}
\end{theorem}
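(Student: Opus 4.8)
The plan is to obtain the theorem as a consequence of Theorem~\ref{theorem: hyperrigidity equivalence}: that result identifies hyperrigidity of $S(C_0(E^0),X(E))$ with the single condition $\calJ_{X(E)}\cdot X(E) = X(E)$, so the whole task reduces to computing the Katsura ideal and translating non-degeneracy of its left action into a density statement about $E^0_{\fin}$. First I would pin down $\calJ_{X(E)}$ concretely. Writing $\lambda$ for the left action, one has $\ker\lambda = C_0(E^0 \setminus \cl{r(E^1)})$ and $\lambda^{-1}(\calK(X(E))) = C_0(E^0_{\fin})$; the orthogonality clause in the definition of the Katsura ideal forces its elements to vanish off $\cl{r(E^1)}$, that is, to lie in $C_0(Y)$ with $Y := \Int(\cl{r(E^1)})$. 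Intersecting gives $\calJ_{X(E)} = C_0(E^0_{\fin}\cap Y)$, so the target becomes the equivalence of $C_0(E^0_{\fin}\cap Y)\cdot X(E) = X(E)$ with density of $E^0_{\fin}$ in $E^0$.

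I would treat the reverse direction first, as it is the cleaner of the two. If $E^0_{\fin}\cap Y$ fails to be dense in $Y$, choose a nonempty open $U \subseteq Y$ with $U \cap E^0_{\fin} = \varnothing$ and a nonzero $f \in C_c(E^1)$ supported on $r^{-1}(U)$. Were the action non-degenerate, Cohen's factorization theorem would write $f = g\cdot x$ with $g \in \calJ_{X(E)}$ and $x \in X(E)$; approximating $x$ by $f_i \in C_c(E^1)$ and using that $g$ vanishes on $U \supseteq r(\supp f)$ forces $\Angle{g\cdot f_i, f} = 0$ for all $i$, whence $\Angle{f,f} = \lim_i \Angle{g\cdot f_i, f} = 0$ and $f = 0$, a contradiction. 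This shows non-degeneracy implies density of $E^0_{\fin}\cap Y$ in $Y$.

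For the forward implication I would fix an approximate unit $(\phi_i)$ for $C_0(E^0_{\fin}\cap Y)$ and show $\phi_i\cdot f \to f$ for every $f \in C_c(E^1)$. The efficient device is to track the positive functions $\calF_i := \Angle{f - \phi_i\cdot f,\, f - \phi_i\cdot f}$, which are supported in a common compact set and decrease pointwise; a Dini/uniform-limit argument then renders the pointwise limit $\calF$ continuous and compactly supported, so it suffices to prove $\calF = 0$. Were $\calF$ nonzero, its open positivity set would, via the edges emanating from it and the openness of $r$, yield a nonempty open subset of $Y$ on which $E^0_{\fin}$ is absent, contradicting density; openness of $r$ is exactly what makes the relevant image $r(s^{-1}(U))$ open and therefore contained in $Y$. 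I expect this to be the main obstacle: the delicate point is to control \emph{all} edges sourced in the positivity set so that the extracted open set genuinely avoids $E^0_{\fin}$ rather than merely meeting its complement, which is where $r$-discreteness and the openness of $r$ must be used carefully.

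Finally I would bridge the density conditions on $Y$ appearing above with the density of $E^0_{\fin}$ in all of $E^0$ asserted by the theorem. Decompose $E^0$ into the three pairwise disjoint pieces $U_0 := E^0 \setminus \cl{r(E^1)}$, the open set $Y$, and the boundary $\partial := \cl{r(E^1)}\setminus Y$. Since $\partial$ is the boundary of a closed set it is nowhere dense, so $U_0 \cup Y$ is a dense open subset of $E^0$; moreover $U_0 \subseteq E^0_{\fin}$ because $\lambda$ annihilates functions supported off $\cl{r(E^1)}$. Hence density of $E^0_{\fin}$ in $E^0$ is equivalent to its density in the dense open set $U_0\cup Y$, and as $U_0$ and $Y$ are disjoint open sets with $U_0 \subseteq E^0_{\fin}$ this splits into the automatic density on $U_0$ together with density of $E^0_{\fin}\cap Y$ in $Y$. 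Combining this topological equivalence with the two implications above, and invoking Theorem~\ref{theorem: hyperrigidity equivalence}, yields the stated equivalence.
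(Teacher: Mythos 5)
Your proposal is correct and follows essentially the same route as the paper: reduce via Theorem~\ref{theorem: hyperrigidity equivalence} to non-degeneracy of $\calJ_{X(E)} = C_0(E^0_{\fin}\cap Y)$, prove density of $E^0_{\fin}\cap Y$ in $Y$ implies non-degeneracy by the Dini-type approximate-unit argument using openness of $r$, prove the converse by Cohen factorization, and finally bridge to density in all of $E^0$ using the key fact that $E^0\setminus\cl{r(E^1)}\subseteq E^0_{\fin}$. Your closing decomposition into $U_0\sqcup Y\sqcup\partial$ with $\partial$ nowhere dense is a slightly more structured packaging of the paper's direct observation that any open set disjoint from $E^0_{\fin}$ must sit inside $Y$, but the content is the same.
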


\begin{proof}
	Let $Y = \Int(\cl{r(E^1)})$. By the above argument, hyperrigidity of $S(C_0(E^0), X(E))$ is equivalent to density of $E^0_{\fin} \cap Y$ in $Y$. To finish the argument, suppose that $E^0_{\fin} \cap Y$ is dense in $Y$. If $x$ is a point in $E^1 \setminus \cl{r(E^1)}$ then there is a non-negative function $f$ supported outside of $\cl{r(E^1)}$ for which $f(x) =1$. Since $\lambda(f) = 0$, we must conclude that $x \in E^0_{\fin}$. In particular, whenever $U$ is an open set in $E^0$ for which $U \cap E^0_{\fin} = \varnothing$ then we must have $U \subset Y$. By our assumption, we must have $U = \varnothing$.
\end{proof}

\section*{Acknowledgements}
The author would like to thank Ken Davidson and Matt Kennedy for their careful reading of this paper and their many helpful comments. The author is supported by an NSERC CGS-D scholarship.

\end{document}